 \newtheorem{theorem}{Theorem}
 \newtheorem{lemma}{Lemma}
 \newtheorem{pro}{Proposition}
 \newtheorem{remark}{Remark}
 \newcommand{\bee}[1]{\begin{equation}\label{#1}}
 \newcommand{\ene}{\end{equation}}
\begin{document}
\title{An uncountable family of almost nilpotent varieties of polynomial growth}
\author[Mishchenko]{S. Mishchenko}\address{Department of Applied Mathematics
\\ Ulyanovsk  State University \\
Ulyanovsk 432970, Russia} \email{mishchenkosp@mail.ru}

\author{A. Valenti}\address{Dipartimento di Energia, ingegneria dell'Informazione e Modelli Matematici \\ Universit\`a di Palermo \\
90128 Palermo, Italy}
 \email{angela.valenti@unipa.it}

\thanks{2000 Mathematics Subject Classification. Primary 17A50, 16R10, 16P90;
Secondary 20C30 Keywords: polynomial identity, cocharacter,
codimension}
\date{}

\begin{abstract}
A non-nilpotent variety of algebras is almost nilpotent if any
proper subvariety is nilpotent. Let the base field be of
characteristic zero. It has been shown that for associative or Lie
algebras only one such variety exists. Here we present infinite
families of such varieties. More precisely we shall prove the
existence of

1) a countable family of almost nilpotent varieties of at most linear growth and

2) an uncountable family of almost nilpotent varieties of at most quadratic growth.
\end{abstract}

\maketitle

\section{Introduction}

Let $F$  be a field of characteristic zero and $F\{X\}$ the free non
associative algebra on a countable set $X$ over $F$. If
$\mathcal{V}$ is a variety of non necessarily associative algebras
and $Id(\mathcal{V})$ is the $T$-ideal of polynomial identities of
${\mathcal{V}}$, then $F\{ X \}/Id(\mathcal{V})$ is the relatively
free algebra of countable rank of the variety ${\mathcal{V}}$. It is
well known that in characteristic zero every identity is equivalent
to a system of multilinear ones, and an important invariant is
provided by the sequence of dimensions $c_n({\mathcal{V}})$ of the
$n$-multilinear part of $F\{ X \}/Id(\mathcal{V})$, $n=1,2,\ldots$.
More precisely,
for every $n\ge 1$ let $P_n$ be the space of multilinear polynomials
in the variables $x_1,\ldots, x_n.$ Since char $F=0$,
$F\{ X\} /Id(\mathcal{V})$ is determined by the
sequence of subspaces $\{P_n /(P_n \cap Id(\mathcal{V}))\}_{n\ge
1}$ and the integer $c_n(\mathcal{V}) =\dim P_n /(P_n \cap
Id(\mathcal{V}))$ is called the $n$-th codimension of $\mathcal{V}$.
The growth function determined by the sequence of integers $\{
c_n({\mathcal{V}})\}_{n\ge 1}$ is the growth of the variety
${\mathcal{V}}.$

In general a variety ${\mathcal{V}}$ has overexponential growth, i.e.,
the sequence of codimensions cannot be bounded by any exponential function.
Recall that ${\mathcal{V}}$ has exponential growth if
$c_n(\mathcal{V})\le a^n$, for all $n\ge 1$, for some constant $a$.
For instance any variety generated by a finite dimensional algebra has exponential growth.
For such varieties the limit $\lim_{n\to\infty}\sqrt[n]{c_n(\mathcal{V})}=exp(\mathcal{V})$,
is called the PI-exponent of the variety $\mathcal{V}$, provided it exists.

We say that a variety $\mathcal{V}$ has polynomial growth if there exist
constants $\alpha, t \ge 0$ such that  asymptotically $c_n({\mathcal{V}})\simeq
\alpha n^t$.
When $t=1$ we speak of linear growth and when $t=2$, of quadratic growth.

Moreover ${\mathcal{V}}$ has intermediate growth if
for any $k> 0,$ $a>1$ there exist constants $C_1, C_2,$ such that
for any $n$ the inequalities
$$
C_1 n^k < c_n({\mathcal{V}}) < C_2 a^n
$$
hold.
Finally we say that a variety $\mathcal{V}$ has subexponential growth  if for any constant
 $B$ there exists $n_0$ such that for all $n > n_0,$   $c_n({\mathcal{V}}) < B^n.$
Clearly varieties with polynomial growth or intermediate growth have
subexponential growth and it can be shown that varieties realizing
each growth can be constructed. For instance a class  of varieties of
intermediate growth was constructed in  \cite{GMZapplied}.

The purpose of this note is the study of the almost nilpotent varieties.
Recall that a variety $\mathcal{V}$ is almost nilpotent if it is not
nilpotent but all proper subvarieties are nilpotent.

About previous results, if we consider varieties of associative
algebras, it is easily seen that the only almost nilpotent variety
is the variety $\mathcal{V}$ of commutative algebras, (the sequence
of codimensions is $c_n(\mathcal{V})=1, n\ge 1$). In case of
varieties of Lie algebras it has been shown that there is also only
one almost nilpotent variety: the variety ${\mathcal A}^2$ of
metabelian Lie algebras and in this case $c_n({\mathcal A}^2)=n-1$.
In \cite{frolova-shu} it was proved that there exist only two almost
nilpotent varieties of Leibniz algebras and both varieties have at
most linear growth. For general non associative algebras, in
\cite{mish-val-exp2} an almost nilpotent variety of exponent two was
constructed. Later in  \cite{mis-shu} it was proved that for any
integer $m$ an almost nilpotent variety with exponent $m$ exists.
Recently in \cite{mish} it was proved the existence of almost
nilpotent varieties with fractional exponent.

An algebra satisfying the identity $x(yz)\equiv 0$ will be called left nilpotent of index two.
In \cite{mish-val-subexp} two almost nilpotent varieties with linear growth were  constructed and it
was proved that they represent a full list of almost nilpotent varieties with subexponential growth in the class of left nilpotent algebras of index two.
For commutative (anticommutative) metabelian algebras similar result were obtained in \cite{ch-frolova-subexp}, \cite{mish-shu-subexp}.

The purpose of this note is to  prove the existence of two families
of almost nilpotent varieties. The first one is a countable family
of at most linear growth and the second one
 is an uncountable family of at most quadratic growth.

\section{The general setting}

Throughout $A$ will be a non necessarily associative algebra over a field
$F$ of characteristic zero and $F\{X\}$ the free non associative
algebra on a countable set $X=\{x_1,x_2, \ldots\}$. The polynomial
identities satisfied by $A$ form a T-ideal $Id(A)$ of $F\{X\}$ and
by the standard multilinearization process, we consider only the
multilinear polynomials lying in $Id(A)$. To this end, for every
$n\ge 1$,  we set $P_n$ to be the space of multilinear polynomials in
$x_1, \ldots, x_n$, and we let the symmetric group $S_n$ act on
$P_n$ be setting $\sigma f(x_1, \ldots,x_n)=f(x_{\sigma(1)}, \ldots,
x_{\sigma(n)})$, for $\sigma\in S_n, f\in P_n$.

The space $P_n(A)={P_n}/({P_n\cap Id(A)})$ has an induced structure
of $S_n$-module and we let $\chi_n(A)$ be its character, called the
$n$-th cocharacter of $A$. By complete reducibility we write
$$
\chi_n(A)=\sum_{\lambda\vdash n} m_\lambda \chi_\lambda
$$
where $\chi_\lambda$ is the irreducible $S_n$-character
corresponding to the partition $\lambda\vdash n$ and $m_\lambda\ge
0$ is the corresponding multiplicity (we refer the reader to
\cite{gz} for an account of this approach).

 \smallskip
 We next recall some
basic properties of the representation theory of the symmetric group
that we shall use in the sequel. Let $\lambda\vdash n$ and let
$T_\lambda$ be a Young tableau of shape $\lambda\vdash n$. We denote
by $e_{T_\lambda}$ the corresponding essential idempotent of the
group algebra $FS_n$. Recall that $e_{T_\lambda}= {R^+_{T_\lambda}}
{C^-_{T_\lambda}}$ where $ {R^+_{T_\lambda}}= \sum_{\sigma\in
R_{T_\lambda}}\sigma, \, {\rm and}\, {C^-_{T_\lambda}}=\sum_{\tau\in
C_{T_\lambda}}({\rm sgn} \tau )\tau $ and $R_{T_\lambda},$
$C_{T_\lambda}$ are the groups of row and column stabilizers of
$T_\lambda$, respectively. Recall that if $M_\lambda$ is an
irreducible $S_n$-submodule of $P_n(A)$ corresponding to $\lambda$,
there exists a polynomial $f(x_1,\dots ,x_n)\in P_n$ and a tableau
$T_\lambda$ such that $e_{T_\lambda}f(x_1,\dots ,x_n)\not\in Id(A).$
Let $e'_{T_\lambda}={C^-_{T_\lambda}}{R^+_{T_\lambda}}
{C^-_{T_\lambda}}.$ Since ${R^+_{T_\lambda}}
{C^-_{T_\lambda}}{R^+_{T_\lambda}}  {C^-_{T_\lambda}}\ne 0$ then
$e'_{T_\lambda}$ is a nonzero essential idempotent that generates the same
irreducible module and so also $e'_{T_\lambda}f(x_1,\dots
,x_n)\not\in Id(A).$

In what follows we shall also denote by  $g(\lambda)$  the
polynomial obtained from the essential idempotent corresponding to a
tableau of shape $\lambda$ by identifying the elements in each row.
Recall that $g(\lambda)$ is an highest weight vector of the general
linear group $GL_k(F)$ where $k$ is the number of distinct part of
$\lambda$ (see \cite{D})

Now, for a fixed arrangement of the parentheses $T,$ let us denote
by $P_n^T$ the subspace of $P_n$ spanned by the monomials whose
arrangement of the parentheses is $T$. Let also $P_n^T(A)=
P_n^T/(P_n^T\cap Id(A)).$ Then clearly $P_n(A) = \sum_{T} P_n^T(A).$

Since the $S_n$-module $P_n^T(A)$ is a homomorphic image of
$P_n^T\equiv FS_n$, the regular $S_n$-representation, it follows
that, if $\chi_n(A)^T$ is the $S_n$-character of $P_n^T(A)$, then
$$
\chi_n(A)^T=\sum_{\lambda\vdash n} m_\lambda^T \chi_\lambda
$$
and $m^T_\lambda \le d_\lambda =\deg\chi_\lambda.$ Clearly
$m_\lambda\le \sum_Tm_\lambda^T$.

Throughout we shall also use the following convention: we shall
write the same symbol (e.g.  $\bar{} \ ,\  \, \tilde{}$\ ) over two
or more variables of a polynomial to indicate that the polynomial is
alternating on these variables.

For instance  $x_3\overline x_1\overline x_2=x_3x_1x_2-x_3x_2x_1$.

\medskip

We also need to recall some results from the theory of infinite words (see \cite{lotha}).
Recall that, given an infinite (associative) word $w$ in the alphabet $\{0,1\}$
the complexity $\mbox{Comp}_w$ of $w$ is defined as the function $\mbox{Comp}_w:
\mathbb{N} \to \mathbb{N}$, where $\mbox{Comp}_w(n)$ is the number
of distinct subwords of $w$ of length $n$.

Also, an infinite word $w=w_1w_2\cdots $ is {\it periodic} with period $T$ if $w_i=w_{i+T}$ for
$i=1,2,\dots $. It is easy to see that for any such word
$\mbox{Comp}_w(n) \le T$. Moreover, an infinite word $w$ is called
a {\it Sturmian} word if $\mbox{Comp}_w(n)=n+1$ for all $n\ge 1$.

For a finite word $x$, the height $h(x)$ of $x$ is the number of
letters $1$  appearing in $x$. Also, if $|x|$ denotes the length
of the word $x$, the {\it slope}  of  $x$ is defined as $\pi(x)=
\frac{h(x)}{|x|}$. In some cases this definition can be extended
to infinite words as follows.
Let $w$ be some infinite
word and let $w(1,n)$ denote its prefix subword of length $n$. If
the sequence $\frac{h(w(1,n))}{n}$ converges for $n\to\infty$ and
the limit
$$
 \pi(w)= \displaystyle{\lim_{n\to\infty} \frac{h(w(1,n))}{n}}
$$
exists then $\pi(w)$ it is called the slope of $w$.
Examples of infinite words for which the slope is not defined can be given.
Nevertheless for periodic and Sturmian words the slope is well defined.
In the next proposition we reassume  the main properties of these words that we shall use here.

\begin{theorem} $($\cite[Section 2.2]{lotha}$)$ \label{lothaire} Let $w$ be a
Sturmian or periodic word. Then there exists a constant $C$ such
that
 \begin{itemize}
\item[1)] $|h(x)-h(y)| \le C,$ for any finite subwords $x,y$ of $w$
with $|x|=|y|$;
 \item[2)] the slope $\pi(w)$ of $w$ exists;
 \item[3)] $|\pi(u)-\pi(w)|\le \frac{C}{|u|}$, for any non-empty subword $u$ of $w$;
\item[4)] for any real number $\alpha\in (0,1)$ there exists
a word $w$ with $\pi(w)=\alpha$ and $w$ is  Sturmian or periodic
according as $\alpha$ is irrational or rational, respectively.
 \end{itemize}
If $w$ is Sturmian we can take $C=1$, and if $w$ is periodic
of period $t$, then $\pi(w)= \frac{h(w(1,t))}{t}$.
 \end{theorem}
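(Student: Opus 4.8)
The plan is to reduce both claims to the \emph{mechanical} (rotation) coding of Sturmian and periodic words. The key structural fact I would invoke is that a word $w$ of slope $\alpha$ is, up to a shift, the mechanical word
\[
s_n=\lfloor (n+1)\alpha+\rho\rfloor-\lfloor n\alpha+\rho\rfloor ,\qquad n\ge 0,
\]
for some intercept $\rho$, with $\alpha$ irrational precisely when $w$ is Sturmian and $\alpha=p/q$ in lowest terms precisely when $w$ is periodic of period $q$. Granting this, the height of a factor $u=s_i\cdots s_{i+m-1}$ of length $m$ telescopes to
\[
h(u)=\lfloor (i+m)\alpha+\rho\rfloor-\lfloor i\alpha+\rho\rfloor=\lfloor m\alpha+\theta\rfloor ,
\]
where $\theta\in[0,1)$ is the fractional part of $i\alpha+\rho$; in particular $h(u)\in\{\lfloor m\alpha\rfloor,\lfloor m\alpha\rfloor+1\}$.

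From this single formula the first three items follow by elementary floor estimates. Two factors of the same length $m$ have heights in the same two-element set, giving $|h(x)-h(y)|\le 1$ and hence 1) with $C=1$ in the Sturmian case. Taking $i=0$ gives $h(w(1,n))=\lfloor n\alpha+\rho\rfloor-\lfloor\rho\rfloor$, so $h(w(1,n))/n\to\alpha$ and the slope exists with $\pi(w)=\alpha$, proving 2). For 3), since $h(u)$ differs from $m\alpha$ by less than $1$ we get $|\pi(u)-\alpha|=|h(u)-m\alpha|/m<1/|u|$, again with $C=1$. For periodic $w=v^{\infty}$ of period $t$ I would instead argue directly: the relation $h(w(1,n+t))=h(w(1,n))+h(v)$ forces $h(w(1,n))/n\to h(v)/t$, giving 2) together with the final formula $\pi(w)=h(w(1,t))/t$, while every length-$m$ factor has height $k\,h(v)+\delta$ with $m=kt+r$ and $0\le\delta\le r<t$, so $|h(x)-h(y)|<t$ yields 1) and 3) with $C=t$.

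For the realization statement 4), given $\alpha\in(0,1)$ I would simply take the mechanical word with slope $\alpha$ and, say, $\rho=0$: if $\alpha$ is irrational this word has complexity $\mbox{Comp}_w(n)=n+1$, hence is Sturmian, and if $\alpha=p/q$ it is periodic of period $q$ with $\pi(w)=p/q=\alpha$. The genuine obstacle is the structural input itself, namely the nontrivial direction of the Morse--Hedlund theorem asserting that a word of complexity exactly $n+1$ is a coding of an irrational rotation, equivalently that it is \emph{balanced}; deriving the balance property directly from the complexity hypothesis $\mbox{Comp}_w(n)=n+1$ is where the real work lies. Once balance (hence the mechanical representation) is available, all four items reduce to the floor-function computations sketched above.
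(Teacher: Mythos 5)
The paper offers no proof of this theorem at all---it is quoted directly from \cite[Section 2.2]{lotha}---and your argument via the mechanical coding $s_n=\lfloor (n+1)\alpha+\rho\rfloor-\lfloor n\alpha+\rho\rfloor$, the telescoping height formula $h(u)=\lfloor m\alpha+\theta\rfloor$, and the separate periodic-case computation is precisely the standard treatment in that cited source, so you are in effect faithfully reconstructing the reference's proof rather than diverging from the paper. Your floor-function estimates for items 1)--3), the constants $C=1$ (Sturmian) and $C=t$ (periodic), and the realization in 4) are all correct, and you rightly isolate the one genuinely hard input---the Morse--Hedlund/balance theorem that complexity $n+1$ forces a mechanical representation---which is exactly where the work lies in \cite{lotha} as well.
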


\section{Algebras constructed from periodic or Sturmian words}

Our aim in this section is  to  prove the existence of two families
of almost nilpotent varieties. The first is  a countable family of
varieties of at most linear growth and the second is an uncountable
family of at most quadratic growth. To do this we will make use of
an algebra constructed in \cite{gia-mish}.

Throughout  $A$ will be the algebra generated by one element $a$ such that
every word in $A$ containing two or more subwords equal to $a^2$
must be zero.

Note that in particular  the algebra $A$  is metabelian, i.e., it
satisfies the identity
$$(x_1x_2)(x_3x_3)\equiv 0.
$$
A partial decomposition of the cocharacter of $A$ was given in
\cite{gia-mish} and we recall it here.

Let $L_a$ and $R_a$ denote the linear transformations on $A$ of
left and right multiplication by $a$, respectively. We shall usually
write $bL_a=L_a(b)=ab$ and $bR_a=R_a(b)=ba.$

 We have the following

\begin{remark}
\hskip3cm

\begin{itemize}
\item[1)]
 $\chi_n(A)= m_{(n)}\chi_{(n)}+m_{(n-1,1)}\chi_{(n-1,1)}$
 \item[2)]
$c_n(A)\ge 2^{n-2}.$
\end{itemize}
   \end{remark}

   \begin{proof}
Let $\lambda=(\lambda_1, \lambda_2, \ldots)\vdash n$ be a partition
of $n$  such that $n-\lambda_1\ge 2.$ This says that either the
first column of $\lambda$ has at least three boxes or the first two
columns of $\lambda$ have at least two boxes each. Hence, if
$f_\lambda$ is an highest weight vector associated to $\lambda$,
either $f$ is alternating on three variables or $f$ is alternating
on two distinct pairs of variables. In both cases every monomial of
$f_\lambda$ evaluated in $A$ contains at least two subwords equal
to $a^2$. Hence $f_\lambda\in Id(A)$ and this implies that
$\chi_\lambda$ appears with zero multiplicity in the decomposition
of $\chi_n(A)$. It follows tha
$$
\chi_n(A)= m_{(n)}\chi_{(n)}+m_{(n-1,1)}\chi_{(n-1,1)}
$$
is the decomposition of $\chi_n(A)$ into irreducibles.

In order to prove 2) we compute the multiplicity $m_{(n)}$ in $\chi_n(A)$.

 Let
$w(L_a,R_a)\in End(A)$ be a word in $L_a$ and $R_a$ of length
$n-2$. Clearly $a^2v(L_a,R_a)=v(L_a,R_a)(a^2)$ is the evaluation of
an highest weight vector associated to the partition $(n)$ which is
not an identity of $A$. Since there are $2^{n-2}$ distinct such
words, we get $2^{n-2}$ highest weight vectors which are linearly
independent mod $Id(A)$. Thus since $\deg \chi_{(n)}=1$, from $\chi_n(A)=
m_{(n)}\chi_{(n)}+m_{(n-1,1)}\chi_{(n-1,1)},$ we have that
$c_n(A)\ge 2^{n-2}.$
 \end{proof}

Next we shall compute the decomposition of the cocharacter
$\chi_n^T(A)$ for a fixed arrangement $T$  of the parentheses  of
$P_n$.

We have the following

\begin{pro} \label{alpha}  For any arrangement $T$ of the parentheses in $P_n$ we have

\begin{equation} \label{eq1}
\chi_n(A)^T= \chi_{(n)}+2\chi_{(n-1,1)}.
\end{equation}

\end{pro}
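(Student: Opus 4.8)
The plan is to pin down the two multiplicities that the Remark leaves open and then assemble them. Running the alternation argument of the Remark verbatim inside $P_n^T(A)$, any highest weight vector attached to a partition $\lambda$ with $n-\lambda_1\ge 2$ is alternating on three variables or on two disjoint pairs, so every evaluation in $A$ contains two subproducts equal to $a^2$ and is therefore an identity; hence only $(n)$ and $(n-1,1)$ survive and $\chi_n(A)^T=m^T_{(n)}\chi_{(n)}+m^T_{(n-1,1)}\chi_{(n-1,1)}$. The general bound $m^T_\lambda\le d_\lambda$ then gives $m^T_{(n)}\le 1$ and $m^T_{(n-1,1)}\le n-1$, so it remains to prove $m^T_{(n)}=1$ and $m^T_{(n-1,1)}=2$. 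I would do this by writing down the relevant $GL$-highest weight vectors and evaluating them in $A$, using throughout the rule that a product of copies of $a$ carrying two disjoint subproducts equal to $a^2$ vanishes.

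For $m^T_{(n)}$ the only candidate is the total symmetrization $\sum_{\sigma\in S_n}T(x_{\sigma(1)},\dots,x_{\sigma(n)})$ of the arrangement $T$. Substituting $x_1=\cdots=x_n=a$ produces $n!$ times the value of $T$ on $a$'s, a nonzero basis element of $A$ of the form $a^2v(L_a,R_a)$ considered in the Remark; hence this vector is not an identity and $m^T_{(n)}=1$.

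For $m^T_{(n-1,1)}$ I would use the $GL$-highest weight vectors of weight $(n-1,1)$, that is, one distinguished variable $z$ together with $n-1$ copies of a second variable $y$. Let $u_p$ be the arrangement $T$ carrying $z$ in leaf $p$ and $y$ in the remaining leaves; the raising operator $z\mapsto y$ sends every $u_p$ to the all-$y$ monomial, so the highest weight vectors form the space $W=\{\sum_p c_pu_p:\sum_p c_p=0\}$ of dimension $n-1$. The decisive evaluation is $y\mapsto a$, $z\mapsto a^2$: then $u_p$ becomes $T$ with a doubled entry at leaf $p$, and by the vanishing rule this is nonzero exactly when leaf $p$ is one of the two leaves of the innermost product (the subproduct $x_ix_j$ in which both factors are single variables) of $T$; for any other leaf the doubled entry creates a second $a^2$ and the value is zero. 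The two surviving values, obtained by doubling the left, respectively the right, leaf of that cherry, differ in their innermost operator ($L_a$ versus $R_a$) and are therefore linearly independent. Consequently the image of $W$ under this evaluation is two–dimensional and $m^T_{(n-1,1)}\ge 2$.

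The main obstacle is the matching upper bound: I must show that no further highest weight vector survives, i.e. that every $f\in W$ killed by the evaluation above already lies in $Id(A)$. By the previous paragraph that kernel consists of the $f=\sum_p c_pu_p$ supported on the non-cherry leaves with $\sum_p c_p=0$, a space of dimension $n-3$, and to see $f\in Id(A)$ I would run the subproduct count over all homogeneous substitutions $y\mapsto Y$, $z\mapsto Z$ by products of $a$'s. If $\deg Y\ge 2$ then at least two of the $n-1$ leaves bearing $y$ receive a factor containing an $a^2$, so two disjoint $a^2$ occur and the value is $0$; if $Y=a$ and $\deg Z\ge 2$ the cherry of $T$ remains intact while $Z$ contributes a second $a^2$, again giving $0$; and if $Y=Z=a$ the common value is annihilated by $\sum_p c_p=0$. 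Hence the kernel of the evaluation is contained in $Id(A)\cap W$, so $m^T_{(n-1,1)}\le 2$, and with the lower bound $m^T_{(n-1,1)}=2$. Feeding both multiplicities into the decomposition yields \eqref{eq1}.
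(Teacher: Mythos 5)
Your proof is correct and takes essentially the same route as the paper: both arguments evaluate the highest weight vectors at $y\mapsto a$, $z\mapsto a^2$, observe that only the two placements of the distinguished variable in the innermost product survive (distinguished by $L_a$ versus $R_a$, giving the lower bound $m^T_{(n-1,1)}\ge 2$), and kill all remaining combinations by the two-$a^2$ vanishing rule together with the sum-zero condition on coefficients --- your ``kernel of the evaluation lies in $Id(A)$'' step is precisely the paper's claim that $g_1,\dots,g_{n-2}$ are pairwise congruent mod $Id(A)$, repackaged as a dimension count. The only (shared) informalities are that, like the paper, you verify membership in $Id(A)$ only on substitutions sending each of $y,z$ to a single word in $a$ --- strictly one must also allow different words in different $y$-slots, the one nontrivial mixed case (exactly one long word, landing on a cherry leaf) being killed again by $\sum_p c_p=0$ since its value is independent of $p$ --- and that your claim that the all-$a$ evaluation of $T$ is nonzero implicitly assumes $P_n^T(A)\ne 0$, a guard the paper states explicitly at the start of its proof.
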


    \begin{proof}  If $P_n^T(A) \ne 0$ then any
monomial of $P_n^T$ is of the form
$$
x_{\sigma(1)}x_{\sigma(2)}T_{1,x_{\sigma(3)}}\dots T_{n-2,
x_{\sigma(n)}} \quad (\mbox{mod}\ Id(A)),
$$
where $T_{j,x_i}=L_{x_i}$ or $T_{j,x_i}=R_{x_i}$, for any $i,j$.

It follows that, mod $Id(A)$, the highest weight vectors
corresponding to standard tableaux of shape $(n-1,1)$ are
$$
g_0(x_1, x_2)=(\overline x_1\overline x_2)T_{x_1}\dots T_{x_1}
$$
and
$$
g_i(x_1, x_2)=(\overline x_1 x_1)T_{1,x_1}\dots T_{i-1,x_1}\overline
T_{i,x_2}T_{i+1,x_1} \dots T_{n-2,x_1}, \quad  1\le i \le n-2.
$$

Recall that the symbol \ $\bar{}$\  over two or more variables of a polynomial
means that the polynomial is alternating on these variables.

We claim that for any $1 \le i,j \le n-2$ the elements $g_i(x_1,
x_2)$ and $g_j(x_1, x_2)$ are linearly dependent mod $Id(A)$. In
fact, since any word containing two subwords equal to $a^2$ is zero
in $A$, in a non-zero evaluation $\varphi$ we must set
$\varphi(x_1)=a$ and $\varphi(x_2)= a^2v(L_a,R_a)$, for some
monomial $v(L_a,R_a)\in End(A)$.

We get

$$
\varphi(g_i(x_1,x_2))=\varphi(g_j(x_1,x_2))=
- a^2v(L_a,R_a)R_aT_{1,a}\dots T_{n-2,a},
$$
and the claim is established.

Next our aim is to prove that the polynomials $g_0(x_1, x_2)$ and
$g_1(x_1, x_2)$ are linearly independent mod $Id(A)$.
In fact suppose that
$\alpha g_0(x_1, x_2)+\beta g_1(x_1, x_2)$ is an identity of $A$,
for some $\alpha, \beta \in F$.
If  we consider the evaluation
$\varphi(x_1)=a$ and $\varphi(x_2)=a^2$, we get
$$
\alpha g_0(a, a^2)+\beta g_1(a, a^2)=\alpha a^2L_aT_{1,a}\dots T_{n-2,a} -
(\alpha +\beta) a^2R_aT_{1,a}\dots T_{n-2,a},
$$
and the right hand side is zero only if $\alpha=\beta=0.$

We have proved that $\chi_{(n-1,1)}$ appears with multiplicity $2$
in the decomposition of $\chi_n(A)^T$. Since $m^T_{(n)}=1$ we get
that $ \chi_n(A)^T= \chi_{(n)}+2\chi_{(n-1,1)} $ and the
proposition is proved.
\end{proof}
 \smallskip

Next for every real number  between $0$ and $1$ we shall construct a
quotient algebras of $A$. To this end we keep in mind the
terminology of the previous section.

We are going to associate to every finite word in the alphabet $\{0,
1\}$ a monomial in $End(A)$ in left and right multiplications: if
$u(0,1)$ is such a word we associate to $u$ the monomial
$u(L_a,R_a)$ obtained by substituting $0$ with $L_a$ and $1$ with
$R_a$.

Let $\alpha$ be a real number, $0 < \alpha < 1,$ and let $w_\alpha$ be a Sturmian or periodic infinite
word in the alphabet $\{0,1\}$ whose slope is
$\pi(w_\alpha)=\alpha$.

Let $I_\alpha$ be the ideal of the algebra $A$ generated by
the elements $a^2u(L_a,R_a)$ where $u(0,1)$ is not a subword of the
word $w_\alpha$.

Let $A_\alpha=A/I_\alpha$ denote the corresponding quotient
algebra and let $\mathcal{V_\alpha}$ be the variety generated by the
algebra $A_\alpha$.

  We have

\begin{lemma}
For any real number $\alpha$, $0 < \alpha < 1$, the variety $\mathcal{V}_\alpha$
has linear or quadratic growth according as $w_\alpha$ is a periodic or a Sturmian word.
\end{lemma}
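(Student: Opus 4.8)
The plan is to use the fact that $A_\alpha = A/I_\alpha$ is a homomorphic image of $A$, so that $Id(A)\subseteq Id(A_\alpha)$ and, by the previous Remark, the cocharacter of $A_\alpha$ is again supported on the two partitions $(n)$ and $(n-1,1)$, say $\chi_n(A_\alpha)= m_{(n)}\chi_{(n)}+m_{(n-1,1)}\chi_{(n-1,1)}$. Since $\deg\chi_{(n)}=1$ and $\deg\chi_{(n-1,1)}=n-1$, we get $c_n(\V_\alpha)=c_n(A_\alpha)=m_{(n)}+(n-1)\,m_{(n-1,1)}$, and the whole problem reduces to estimating the two multiplicities in terms of the complexity function $\mbox{Comp}_{w_\alpha}$ of the word $w_\alpha$.

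First I would compute $m_{(n)}$. A highest weight vector for the partition $(n)$ is obtained by identifying all the variables, and evaluating it on $A$ amounts to reading off one of the $2^{n-2}$ words $a^2u(L_a,R_a)$, with $u\in\{0,1\}^{n-2}$, exactly as in the proof of the previous Remark. By definition of $I_\alpha$ such a word is nonzero in $A_\alpha$ precisely when $u$ is a subword of $w_\alpha$, and distinct subwords give linearly independent vectors. Hence $m_{(n)}=\mbox{Comp}_{w_\alpha}(n-2)$.

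The heart of the argument is the estimate of $m_{(n-1,1)}$. Here I would start from Proposition \ref{alpha}: for each arrangement $T$ of the parentheses the space of highest weight vectors of shape $(n-1,1)$ is two dimensional, spanned by $g_0$ and $g_1$, each carrying a fixed $L/R$ pattern $u$ of length $n-2$. As in the proof of Proposition \ref{alpha}, a nonzero evaluation forces $x_1\mapsto a$ and $x_2\mapsto a^2v(L_a,R_a)$, and one checks that under this substitution $g_0$ becomes a difference of two words of the form $a^2\,v\,0\,u$ and $a^2\,v\,1\,u$ (and similarly for $g_1$). Such a vector is therefore not an identity of $A_\alpha$ exactly when $u$ admits a one letter left extension to a subword of $w_\alpha$; since periodic and Sturmian words are (uniformly) recurrent, every length $(n-2)$ subword occurs with letters on its left, so the surviving patterns are, up to a bounded correction, exactly the subwords of length $n-2$. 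Counting the two types $g_0,g_1$ I expect $\mbox{Comp}_{w_\alpha}(n-2)\le m_{(n-1,1)}\le 2\,\mbox{Comp}_{w_\alpha}(n-2)$, that is $m_{(n-1,1)}=\Theta(\mbox{Comp}_{w_\alpha}(n-2))$.

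It then remains to insert the value of the complexity. If $w_\alpha$ is periodic of period $t$ then $\mbox{Comp}_{w_\alpha}(n-2)\le t$ is bounded, whence $c_n(\V_\alpha)=O(1)+(n-1)\,O(1)=\Theta(n)$ and the growth is linear; if $w_\alpha$ is Sturmian then $\mbox{Comp}_{w_\alpha}(n-2)=n-1$, whence $c_n(\V_\alpha)=(n-1)+(n-1)\,\Theta(n)=\Theta(n^2)$ and the growth is quadratic. The main obstacle is the middle step: one must control the collapse of the vectors $g_i$ for $i\ge 1$ already observed in Proposition \ref{alpha}, decide precisely which one letter left extensions occur (this is where the recurrence of periodic and Sturmian words enters), and make sure that evaluations with $x_2\mapsto a^2v$ for longer $v$ do not produce additional independent highest weight vectors beyond the two types; once this bookkeeping is done the $\Theta(\mbox{Comp}_{w_\alpha})$ estimate, and hence the dichotomy linear/quadratic, follows.
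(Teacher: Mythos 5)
Your proof is correct and follows essentially the same route as the paper's: both rely on the per-arrangement decomposition $\chi_n(A)^T=\chi_{(n)}+2\chi_{(n-1,1)}$ (whose multiplicities bound those of the quotient $A_\alpha$ from above), on the survival of the highest weight vector $g_0$ for exactly those $L/R$-patterns coming from subwords of $w_\alpha$, and on counting the nonzero arrangements by the complexity function $\mbox{Comp}_{w_\alpha}$, which is bounded in the periodic case and linear in the Sturmian case. The only cosmetic differences are that you aggregate into global multiplicities via $c_n(A_\alpha)=m_{(n)}+(n-1)\,m_{(n-1,1)}$ where the paper sums the degree contributions arrangement by arrangement, and that you invoke recurrence to left-extend a subword of length $n-2$ where the paper instead strips the leftmost letter off a subword of length $n-1$ --- the same count either way.
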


\begin{proof}
We are going to find an upper and a lower bound of the codimensions
of the algebra $A_\alpha$. To this end we start from the
decomposition of the cocharacter of $A$ given in (\ref{eq1}).

Let $n\ge 3$ be any integer and let $u(0,1)$ be a subword of the word $w_\alpha$ of length $n-1$.
We may clearly assume that $0$ is the leftmost symbol of such word and,
so, we write $u(0,1)=0v(0,1)$ for some subword $v(0,1)$ of $w_\alpha$.

Since $u$ and $v$ are subwords of $w_\alpha$, $a^2v(L_a,R_a),a^2L_av(L_a,R_a)\not\in I_\alpha$.
This implies that the polynomial
$$
g_0(x_1, x_2)=(\overline x_1 \overline x_2)v(L_{x_1},R_{x_1})
$$
is not an identity of the algebra $A_\alpha$.
In fact, recall that the evaluation $\varphi(x_1)=a, \varphi(x_2)=a^2$ gives
$\varphi(g_0(x_1, x_2))=a^2L_av(L_a,R_a)-a^2R_av(L_a,R_a)\not\in I_\alpha.$

Since the word $u(0,1)$ was an arbitrary subword of the word $w_\alpha$ of length $n-1$,
this says that, for any corresponding arrangement $T$ of the parentheses in
\begin{equation} \label{eq2}
\chi_n(A_\alpha)^T=
\chi_n + m^T_{(n-1,1)}\chi_{n-1,1}
\end{equation}
we must have $m^T_{(n-1,1)}>0.$
Moreover compare the last equality with (\ref{eq1}) and recall that, since $A_\alpha$ is
a quotient algebra of $A,$ the multiplicities in
$\chi_n^T(A_\alpha)$ are bounded by the multiplicities in
$\chi_n^T(A)$. It follows that $0< m^T_{(n-1,1)} \le 2.$

Now, the different arrangements of the parentheses in nonzero words
of length $n$ in $A_\alpha$ correspond to the subwords of $w_\alpha$
of length $n$. Recalling that
$\mbox{Comp}_{w_\alpha}(n)=\mbox{constant}$ \ or
$\mbox{Comp}_{w_\alpha}(n)=n+1$ according as $w_\alpha$ is periodic
or Sturmian, respectively it follows that their number is bounded by
a constant in case $\alpha$ is rational (i.e., $w_\alpha$ is
periodic) and by a linear function of $n$ in case $\alpha$ is
irrational (i.e., $w_\alpha$ is Sturmian).

Since $\deg \chi_{(n)}=1$ and $\deg \chi_{(n-1,1)}= n-1$, from
(\ref{eq2}) and the above discussion we can find constants $C_1,C_2$
such that
for any $n$ we have
$$
C_1n \le c_n(A_\alpha)\le C_2n,
$$
if $\alpha$ is rational,
and
$$
C_1n^2 \le c_n(A_\alpha)\le C_2n^2
$$
if  $\alpha$ is irrational.

Recalling that the growth of $\mathcal{V}_\alpha$ is the growth of
the sequence $c_n(A_\alpha)$ the proof of the lemma is complete.
\end{proof}

\begin{pro} \label{alpha} For $0 < \alpha < \beta < 1,$ the variety
$\mathcal{V_\alpha}\cap \mathcal{V_\beta}$ is nilpotent.
\end{pro}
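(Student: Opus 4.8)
The plan is to show that the intersection variety has eventually vanishing codimensions, which for a variety is equivalent to nilpotency; so it suffices to prove $c_n(\mathcal{V}_\alpha\cap\mathcal{V}_\beta)=0$ for all sufficiently large $n$. Since $A_\alpha$ and $A_\beta$ are quotients of $A$, the Remark guarantees that only the partitions $(n)$ and $(n-1,1)$ can occur in $\chi_n(\mathcal{V}_\alpha\cap\mathcal{V}_\beta)$. Moreover $Id(\mathcal{V}_\alpha\cap\mathcal{V}_\beta)=Id(A_\alpha)+Id(A_\beta)$, and because $T$-ideals are homogeneous this sum splits degreewise, $P_n\cap Id(\mathcal{V}_\alpha\cap\mathcal{V}_\beta)=(P_n\cap Id(A_\alpha))+(P_n\cap Id(A_\beta))$. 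Hence it is enough to prove that, for large $n$, every highest weight vector of shape $(n)$ and of shape $(n-1,1)$ already lies in $Id(A_\alpha)+Id(A_\beta)$; this forces $m_{(n)}=m_{(n-1,1)}=0$ and therefore $c_n(\mathcal{V}_\alpha\cap\mathcal{V}_\beta)=0$.

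The combinatorial heart is a consequence of Theorem \ref{lothaire}, item 3). Let $C$ be a constant valid for both $w_\alpha$ and $w_\beta$. If a binary word $u$ is a factor of both $w_\alpha$ and $w_\beta$, then $|\pi(u)-\alpha|\le C/|u|$ and $|\pi(u)-\beta|\le C/|u|$, so $\beta-\alpha\le 2C/|u|$ and $|u|\le 2C/(\beta-\alpha)$; thus no word of length exceeding $N:=2C/(\beta-\alpha)$ is a common factor. I also need a refinement for the two words $0\tau$ and $1\tau$ that differ only in their first letter: here $|\pi(0\tau)-\pi(1\tau)|=1/|0\tau|$, so if $0\tau$ were a factor of $w_\beta$ and $1\tau$ a factor of $w_\alpha$ (or vice versa) I would get $\beta-\alpha\le |\pi(0\tau)-\beta|+|\pi(0\tau)-\pi(1\tau)|+|\pi(1\tau)-\alpha|\le (2C+1)/|0\tau|$, which is impossible once $|0\tau|>N':=(2C+1)/(\beta-\alpha)$. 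I then fix $n$ with $n-2>N'$, which covers all the length thresholds below.

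Next I recall from the analysis leading to (\ref{eq1}) and (\ref{eq2}) that, for a fixed arrangement $\tau$ (a binary word of length $n-2$), a highest weight vector is an identity of $A_\gamma$ precisely when every word produced by a nonzero evaluation is a non-factor of $w_\gamma$; since the set of factors is closed under taking subwords, and each such word contains as a suffix the \emph{core} word obtained from the evaluation $x_1=a,\ x_2=a^2$, it suffices to test these core words. For shape $(n)$ the core is the single word $\tau$ of length $n-2>N$, which is a non-factor of at least one of $w_\alpha,w_\beta$, so the symmetric highest weight vector lies in $Id(A_\alpha)\cup Id(A_\beta)$. For shape $(n-1,1)$ the vectors $g_i$ with $i\ge 1$ have core the single word $1\tau$ of length $n-1>N$ and are disposed of the same way, while $g_0$ has the two cores $0\tau$ and $1\tau$; $g_0$ is an identity of $A_\gamma$ iff \emph{both} are non-factors of $w_\gamma$, and the only way this could fail simultaneously for $\gamma=\alpha$ and $\gamma=\beta$ is exactly the mixed factorization pattern excluded by the refinement above. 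Hence every such $g_0$ also lies in $Id(A_\alpha)\cup Id(A_\beta)\subseteq Id(A_\alpha)+Id(A_\beta)$.

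The step I expect to be delicate is precisely this treatment of $g_0$. Unlike the symmetric vectors and the $g_i$ with $i\ge 1$, a single highest weight vector of shape $(n-1,1)$ is supported on the two adjacent words $0\tau$ and $1\tau$, so it need not be an identity of either $A_\alpha$ or $A_\beta$ individually; absorbing it into the sum $Id(A_\alpha)+Id(A_\beta)$ is what forces me to use the sharper slope estimate with the extra $+1$ rather than the bare disjointness of long factors. Once this is established, all highest weight vectors of both admissible shapes are absorbed into $Id(A_\alpha)+Id(A_\beta)$ for $n-2>N'$, so $c_n(\mathcal{V}_\alpha\cap\mathcal{V}_\beta)=0$ and the intersection variety is nilpotent.
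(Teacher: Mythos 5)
Your proof is correct, and it rests on the same combinatorial engine as the paper --- item 3) of Theorem \ref{lothaire}, which forces the sets of long common factors of $w_\alpha$ and $w_\beta$ to be empty --- but you route the conclusion through the representation-theoretic layer, while the paper argues directly on monomials. The paper's proof is much shorter: choose $m$ so that no word of length $m$ is a factor of both $w_\alpha$ and $w_\beta$; then for each arrangement of parentheses the associated binary word $u$ of length $m$ fails to be a factor of (say) $w_\alpha$, so the multilinearized monomial $y_1y_2\overline{u}$ is an identity of $\mathcal{V}_\alpha$, hence of $\mathcal{V}_\alpha\cap\mathcal{V}_\beta$; running over all arrangements gives $c_{m+2}(\mathcal{V}_\alpha\cap\mathcal{V}_\beta)=0$ outright, with no need to split the $T$-ideal into a sum, track multiplicities, or examine highest weight vectors shape by shape. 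Your extra scaffolding is sound (in characteristic zero $Id(\mathcal{V}_\alpha\cap\mathcal{V}_\beta)=Id(A_\alpha)+Id(A_\beta)$ does split degreewise, and your core-word criterion for when $g_0$, $g_i$ are identities of $A_\gamma$ is accurate), but the step you flagged as delicate is in fact vacuous: in your mixed pattern, $0\tau$ a factor of $w_\beta$ and $1\tau$ a factor of $w_\alpha$ would make $\tau$ itself, of length $n-2$, a common factor of both infinite words --- factors are closed under subwords, as you yourself note --- and this is already excluded by the bare bound $n-2>2C/(\beta-\alpha)$. So the sharpened slope estimate with the extra $+1$ is never needed, and $g_0$, like every element of $P_n^T$ for $\tau$ long enough, is an identity of at least one of $A_\alpha$, $A_\beta$; that single observation, applied at the level of monomials rather than highest weight vectors, is exactly the paper's proof. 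What your longer route buys is a slightly finer picture (it shows explicitly how each irreducible constituent dies in the sum of the two $T$-ideals), at the cost of machinery the statement does not require.
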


\begin{proof}
Let $K_n(w_\gamma)$ denote the set of different subwords of length $n$ of a word $w_\gamma.$
now, the slope of the words $w_\alpha$ and  $w_\beta$ is equal to $\alpha$
and $ \beta$, respectively. Since $\alpha \ne \beta$, by Theorem
\ref{lothaire} there exist $m$ such that for any $n\ge m$ the
intersection $K_n(w_\alpha)\cap K_n(w_\beta)$ is the empty set.
In
particular there exist $ m$ such that any word $u(0, 1)$ of
length $m$ is not a subword either of the word  $w_\alpha$ or of the
word $ w_\beta.$

Let for instance $u(0, 1)$  be a word of length $m$ which is not a subword of the
word $w_\alpha,$  and consider the monomial $ y_1y_2u(L_x,R_x).$
Construct the multilinear element $y_1y_2\overline{u}$ on $y_1, y_2, x_1,
\ldots, x_m$ where $\overline{ u }$ is obtained by substituting
$x_1, \ldots, x_m $ instead of $x $ inside $u(L_x, R_x).$ Hence $
y_1y_2\overline{u} \equiv 0$ is an identity of the variety
$V_\alpha.$ It follows that $y_1y_2\overline{u}\equiv 0$ is also an
identity of $V_\alpha \cap V_\beta,$ and so $c_{m+2}(V_\alpha \cap
V_\beta) = 0.$
  From this it follows that
$P^T_{m+2}(\mathcal{V_\alpha}\cap \mathcal{V_\beta})=0$ for any
arrangement of the parentheses $T$ and the variety
$\mathcal{V_\alpha}\cap \mathcal{V_\beta}$ is nilpotent.
\end{proof}

We can now prove the main result of this note.

\begin{theorem} \label{mainth}
Over a field of characteristic zero there are countable many almost nilpotent metabelian varieties
of at most linear growth and uncountable many almost nilpotent metabelian varieties of at most quadratic growth.
\end{theorem}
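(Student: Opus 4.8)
The plan is to exhibit the two families explicitly and to verify four things about them: that each member is metabelian, that each has the prescribed growth (hence is non-nilpotent), that distinct parameters give distinct varieties, and---this is the heart of the matter---that each member is almost nilpotent. For the countable family I take $\mathcal{V}_\alpha$ with $\alpha\in(0,1)$ rational, so that $w_\alpha$ is periodic; for the uncountable family I take $\alpha\in(0,1)$ irrational, so that $w_\alpha$ is Sturmian. Since $A_\alpha$ is a quotient of the metabelian algebra $A$, every $\mathcal{V}_\alpha$ is metabelian, and by the Lemma $\mathcal{V}_\alpha$ has linear growth in the rational case and quadratic growth in the irrational case; in particular $c_n(A_\alpha)\ge C_1 n>0$ for all $n$, so no $\mathcal{V}_\alpha$ is nilpotent. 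Distinctness is immediate from the Proposition asserting that $\mathcal{V}_\alpha\cap\mathcal{V}_\beta$ is nilpotent whenever $\alpha\neq\beta$: if $\mathcal{V}_\alpha$ and $\mathcal{V}_\beta$ coincided, their common intersection would be non-nilpotent, a contradiction. As the rationals in $(0,1)$ are countable and the irrationals uncountable, this already yields the announced cardinalities.

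It remains to prove that each $\mathcal{V}_\alpha$ is almost nilpotent, i.e. that every proper subvariety $\mathcal{W}\subsetneq\mathcal{V}_\alpha$ is nilpotent. Non-nilpotency of $\mathcal{V}_\alpha$ is already established, so I fix a proper $\mathcal{W}$ and choose a multilinear polynomial $f\in P_n$ lying in $Id(\mathcal{W})\setminus Id(A_\alpha)$. The decomposition (\ref{eq1}) shows that modulo $Id(A_\alpha)$ the space $P_n^T(A_\alpha)$ is, for each admissible arrangement $T$ of the parentheses, a very small module---one copy of $\chi_{(n)}$ together with at most two copies of $\chi_{(n-1,1)}$---and that the admissible arrangements $T$ are exactly the subwords of $w_\alpha$ of the appropriate length. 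First I would project $f$ onto a single arrangement $T_0$ and onto a single isotypic component on which it is non-trivial; this pins down a specific subword $u_0$ of $w_\alpha$ and shows that $\mathcal{W}$ satisfies the identity killing the highest weight vector (of type $(n)$ or $(n-1,1)$) attached to $u_0$.

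The decisive step is then a propagation argument driven by the uniform recurrence of $w_\alpha$. By Theorem \ref{lothaire} the word $w_\alpha$ (periodic or Sturmian) has bounded gaps between occurrences of any fixed factor, so there is an integer $R$ such that $u_0$ occurs as a factor of every subword of $w_\alpha$ of length at least $R$. The claim to be proved is that once the nonidentity attached to $u_0$ is killed in $\mathcal{W}$, the nonidentity attached to every longer subword $s=p\,u_0\,q$ of $w_\alpha$ is killed as well; granting this, every subword of length $\ge R$ contributes nothing to $P_N(\mathcal{W})$ for $N\ge R+2$, whence $c_N(\mathcal{W})=0$ and $\mathcal{W}$ is nilpotent. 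Because the monomials of $A_\alpha$ are built from the base product by successive left and right multiplications, extending $u_0$ on the right by the block $q$ is a direct consequence of $f\equiv 0$ (right-multiply by the new variables and alternate), so this half is routine.

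The main obstacle is the opposite, ``prepend'' direction: deriving the vanishing of the vector attached to $p\,u_0$ from that attached to $u_0$ amounts to inserting the operations of $p$ between the alternating base $(\overline{x}_1\overline{x}_2)$ and the $u_0$-block, and this is not a formal $T$-ideal consequence. To overcome it I would exploit the special relations of $Id(A_\alpha)$ recorded in the proof of the codimension Proposition---above all the collapse $g_i\equiv g_j \pmod{Id(A)}$ of the highest weight vectors in which the distinguished alternating variable sits at an interior position---to move that variable freely along the operation string, and then feed this freedom into the recurrence of $w_\alpha$ so as to connect $u_0$ to each of its left extensions occurring in $w_\alpha$. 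Making this interplay between the representation-theoretic collapse and the combinatorics of Sturmian or periodic factors precise is where essentially all the work of the theorem is concentrated.
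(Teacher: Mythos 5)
Your proposal diverges from the paper at the decisive point, and the divergence is a genuine gap, not just a different route. You set out to prove that each $\mathcal{V}_\alpha$ is itself almost nilpotent, but the paper never claims this, and its proof does not need it. Instead the paper invokes \cite[Theorem 1]{mish-val-exp2}: every non-nilpotent variety contains an almost nilpotent subvariety. Since $\mathcal{V}_\alpha$ is non-nilpotent (by the growth Lemma), one chooses an almost nilpotent subvariety $\mathcal{U}_\alpha\subseteq\mathcal{V}_\alpha$; it inherits only the \emph{upper} bound $c_n(\mathcal{U}_\alpha)\le c_n(\mathcal{V}_\alpha)$, which is exactly why the theorem says ``at most'' linear or quadratic growth. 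Distinctness then follows from the proposition that $\mathcal{V}_\alpha\cap\mathcal{V}_\beta$ is nilpotent for $\alpha\ne\beta$: if $\mathcal{U}_\alpha=\mathcal{U}_\beta$, this non-nilpotent variety would sit inside a nilpotent one. Your easy verifications (metabelianity, growth, cardinalities, the intersection trick for distinctness) all match the paper; but the entire second half of your proposal attempts a much stronger statement that the paper deliberately avoids.

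And that stronger statement is where your argument breaks down, in at least two concrete places. First, from $f\in Id(\mathcal{W})\setminus Id(A_\alpha)$ you cannot ``pin down a specific subword $u_0$'': a multilinear identity decomposes as $f=\sum_T f_T$ over arrangements of parentheses, and $Id(\mathcal{W})$ contains the sum but not the individual summands $f_T$; projecting onto an isotypic component therefore yields only a linear combination of highest weight vectors attached to \emph{many} factors of $w_\alpha$ at once, from which the vanishing of a single factor's vector does not follow. Second, the ``prepend'' propagation step --- which you yourself concede is ``where essentially all the work of the theorem is concentrated'' --- is left entirely unproved, and it is precisely the step that is not a $T$-ideal consequence; moreover, the uniform recurrence (bounded gaps) you invoke is not among the properties listed in Theorem \ref{lothaire}, so even your combinatorial input is uncited. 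Most seriously, the target claim itself is doubtful: nothing in the paper rules out non-nilpotent proper subvarieties of $\mathcal{V}_\alpha$, and the authors' retreat to unidentified subvarieties $\mathcal{U}_\alpha$ with only ``at most'' growth bounds strongly suggests that almost nilpotency of $\mathcal{V}_\alpha$ itself is either false or beyond reach. The repair is short: replace your entire second half by the citation of \cite[Theorem 1]{mish-val-exp2} and run the distinctness argument on the $\mathcal{U}_\alpha$ rather than the $\mathcal{V}_\alpha$, exactly as the paper does.
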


\begin{proof}
Recall that by  \cite[Theorem 1]{mish-val-exp2} every non-nilpotent variety has an almost nilpotent subvariety.
Hence for any real number $\alpha$, $0<\alpha <1$, the variety $\mathcal{V_\alpha}$ contains
an almost nilpotent subvariety. Let  $\mathcal{U_\alpha}$ be such subvariety.
Since $c_n(\mathcal{U_\alpha})\le c_n(\mathcal{V_\alpha})$, then $c_n(\mathcal{U_\alpha})\le Cn$ or
$c_n(\mathcal{U_\alpha})\le Cn^2$, according as $\alpha$ is rational or irrational, respectively.
Hence $\mathcal{U}_\alpha$ has at most quadratic growth.

Now, by Proposition \ref{alpha} for any $0<\alpha<\beta<1$
$\mathcal{U_\alpha}\ne \mathcal{U_\beta}$, and this says that there
are countable many almost nilpotent metabelian varieties of at most
linear growth  and uncountable many almost nilpotent metabelian
varieties of at most quadratic growth.
\end{proof}

\medskip

\end{document}